\documentclass[11pt,a4paper,reqno]{amsart}
\usepackage[no-math]{fontspec}
\usepackage[english]{babel}
\babelfont{rm}[BoldFont=cmunbx.otf,ItalicFont=cmunti.otf,
  BoldItalicFont=cmunbi.otf]{cmunrm.otf}
\usepackage{amssymb}
\usepackage{microtype}
\usepackage[a4paper,margin=30mm,headheight=14pt]{geometry}
\usepackage{booktabs}
\usepackage[unicode,hidelinks]{hyperref}
\hypersetup{pdftitle={Simple torsion-free linear groups from global division algebras},pdfauthor={Ivan Samuil Nnuga Mongo}}
\theoremstyle{plain}
\newtheorem{theorem}{Theorem}[section]
\newtheorem{lemma}[theorem]{Lemma}
\newtheorem{proposition}[theorem]{Proposition}
\newtheorem{corollary}[theorem]{Corollary}
\theoremstyle{definition}

\theoremstyle{remark}
\newtheorem{remark}[theorem]{Remark}
\numberwithin{equation}{section}
\newcommand{\Q}{\mathbb Q}
\newcommand{\R}{\mathbb R}
\newcommand{\Z}{\mathbb Z}
\newcommand{\Nrd}{\operatorname{Nrd}}

\newcommand{\inv}{\operatorname{inv}}
\newcommand{\SL}{\operatorname{SL}}
\newcommand{\GL}{\operatorname{GL}}

\newcommand{\lcm}{\operatorname{lcm}}

\title[Simple reduced norm-one groups]{Simple torsion-free linear groups\ from global division algebras}
\author{Ivan Samuil Nnuga Mongo}
    
\address{
Novosibirsk State University,
Novosibirsk, Russia
}
\email{i.nnuga.mongo@g.nsu.ru}

\keywords{simple group, linear group, division algebra, reduced norm,
local index, Margulis--Platonov theorem}

\begin{document}

\begin{abstract}
We investigate the reduced norm-one group
$\mathrm{SL}_1(D)$ of a central division
algebra $D$ of odd degree over $\mathbb{Q}$.

Using the Brauer--Hasse--Noether theorem
and the Margulis--Platonov theorem, we
derive a criterion for abstract simplicity
in terms of local division completions.

For a division algebra of degree $15$
with local invariants
$1/3$, $-1/3$, $1/5$, and $-1/5$,
the construction yields a candidate
example of an infinite, torsion-free,
nonabelian simple linear group.

This would provide an affirmative answer
to Problem 16.78 of the Kourovka Notebook.

The argument was generated using an AI
system and has not yet been independently
verified by human mathematicians.
\end{abstract}

\maketitle

\section{Introduction and main results}

Throughout this paper, a group is called linear if it admits a faithful
finite-dimensional representation over a commutative field. Simplicity
is understood in the abstract sense: normal subgroups are not assumed
to be closed or definable. A torsion-free group has, in particular,
no involutions, that is, no elements of order~$2$.

Let $D$ be a central division algebra over a field $F$. Write
$n=\deg D$, so that $[D:F]=n^2$, and put
\begin{equation}\label{eq:gamma}
 \Gamma(D)=\SL_1(D)
 =\{x\in D^\times:\Nrd_{D/F}(x)=1\}.
\end{equation}
Here $D^\times$ is the multiplicative group of nonzero elements and
$\Nrd_{D/F}$ denotes the reduced norm. For $D/\Q$, define
\begin{equation}\label{eq:T}
 T(D)=\{p\text{ prime}:D\otimes_{\Q}\Q_p
                  \text{ is a division algebra}\}.
\end{equation}

\begin{theorem}\label{thm:criterion}
Let $D/\Q$ be a central division algebra of odd degree $n>1$.
Then $\Gamma(D)$ is countably infinite, nonabelian and torsion-free,
and admits faithful representations
\[
 \Gamma(D)\hookrightarrow\SL_{n^2}(\Q),\qquad
 \Gamma(D)\hookrightarrow\SL_n(\R).
\]
Moreover, $\Gamma(D)$ is abstractly simple if and only if
$T(D)=\varnothing$.
\end{theorem}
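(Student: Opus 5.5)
The plan is to treat the structural assertions first, by direct algebra, and then reduce simplicity to the Margulis--Platonov theorem for the algebraic group $G=\SL_1(D)$. This is an anisotropic inner form of type $A_{n-1}$ over $\Q$, and for such groups the conjecture is known (Rapinchuk--Segev--Seitz).

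\emph{Linearity.} Let $D$ act on itself by left multiplication, identifying $D\cong\Q^{n^2}$. This gives an embedding $D^\times\hookrightarrow\GL_{n^2}(\Q)$, and under it $\det(L_x)=\Nrd(x)^n$. Hence $\Gamma(D)$ lands in $\SL_{n^2}(\Q)$. Since $n$ is odd, the real invariant of $D$ is trivial, so $D\otimes_\Q\R\cong M_n(\R)$. Under this isomorphism $\Nrd$ becomes $\det$, which gives $\Gamma(D)\hookrightarrow\SL_n(\R)$.

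\emph{Size and noncommutativity.} Countability is immediate because $D$ is countable. For infinitude and nonabelianness I would use that $G$ is a connected, semisimple, rational (hence unirational) group of positive dimension over the infinite field $\Q$. Therefore $G(\Q)$ is Zariski dense in $G$. A Zariski-dense subgroup of a nonabelian connected group is infinite and nonabelian.

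\emph{Torsion-freeness.} Suppose $x\in\Gamma(D)$ has finite order $m$. Then $\Q(x)\subset D$ is a subfield, so $[\Q(x):\Q]$ divides $n$ and is therefore odd. Since $\Q(x)\cong\Q(\zeta_m)$ and $\varphi(m)$ is even for $m>2$, we get $m\le2$, so $x=\pm1$. But $\Nrd(-1)=(-1)^n=-1$, so $x=1$. The same computation shows that the center $Z(\Gamma(D))=\Gamma(D)\cap\Q^\times=\{a\in\Q:a^n=1\}$ is trivial.

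\emph{Simplicity criterion.} By Brauer--Hasse--Noether, the anisotropic nonarchimedean places of $G$ are exactly the primes $p$ with $D_p=D\otimes\Q_p$ a division algebra, that is, the set $T(D)$. The archimedean place is isotropic because $G(\R)=\SL_n(\R)$.

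If $T(D)=\varnothing$, the Margulis--Platonov theorem states that every noncentral normal subgroup of $G(\Q)$ is open in the (here trivial) $T$-adic topology, and so equals $G(\Q)$. Since the center is trivial, $\Gamma(D)$ is simple.

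If $T(D)\ne\varnothing$, pick $p\in T(D)$. The group $\SL_1(D_p)$ is compact and profinite, and it has proper open normal subgroups, namely the congruence subgroups $1+\pi^k\mathcal O_{D_p}$ intersected with $\SL_1(D_p)$. By weak approximation for $G$ (valid since $G$ is rational), $\Gamma(D)$ is dense in $\SL_1(D_p)$. Let $U$ be such a congruence subgroup. Then $N=\Gamma(D)\cap U$ is normal in $\Gamma(D)$, and it is proper because its closure is $U\ne\SL_1(D_p)$. It is also nontrivial: $N$ is dense in the infinite group $U$, hence infinite, and in particular noncentral. So $\Gamma(D)$ is not simple.

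\emph{Main obstacle.} The crux is invoking the correct form of the Margulis--Platonov theorem for anisotropic inner forms of type $A_{n-1}$, including the fact that the normal subgroup theorem holds with exactly this $T$. The remaining steps are routine; the converse direction needs only density and the existence of congruence subgroups.
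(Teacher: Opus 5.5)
Your proposal is correct, but it takes a different route from the paper in the elementary parts and, more substantially, in the non-simplicity direction.

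For the structural facts, the paper uses no algebraic-group input. A nonidentity commutator of $D^\times$ shows $\Gamma(D)\neq1$. Torsion-freeness then gives infinitude; the paper proves it via a real root of an odd-degree minimal polynomial, where you use parity of $\varphi(m)$. Lemma~\ref{lem:center} obtains the center from the elementary Zariski density of $D^\times(\Q)$ and an explicit commutator identity in $\GL_n$.

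Your use of density of $G(\Q)$ works, but two justifications need repair. First, ``the same computation'' does not give $Z(\Gamma(D))\subseteq\Q^\times$, which is the real content of the center calculation. It does follow from your density: an element centralizing a Zariski-dense subgroup centralizes $\SL_n(\overline{\Q})$ and is therefore scalar. Second, you should not invoke rationality of $\SL_{1,D}$. That is a delicate property which fails for some division algebras (e.g.\ whenever $\mathrm{SK}_1(D)\neq1$). Zariski density follows instead from Borel's theorem that connected reductive groups are unirational. Density of $\Gamma(D)$ in $\SL_1(D_p)$ follows from weak approximation for simply connected groups over number fields, or from strong approximation, since $G(\R)=\SL_n(\R)$ is noncompact.

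For $T(D)\neq\varnothing$, the paper embeds $\Gamma(D)$ in $D_p^\times$. It uses $[U_r,U_s]\subseteq U_{r+s}$ to show that $D_p^\times$ is residually solvable and so has no nontrivial perfect subgroup, and notes that a nonabelian simple group is perfect. Your congruence subgroups instead give an explicit proper normal subgroup of the Margulis--Platonov form $\delta^{-1}(W)$.

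In fact your argument needs no density at all. The condition $\Nrd(x)=1$ forces $x\in\mathcal O_{D_p}^\times$, and $\mathcal O_{D_p}^\times/U_k$ is finite. So $\Gamma(D)\cap U_k$ has finite index in the infinite group $\Gamma(D)$, and is therefore nontrivial. It is proper once $k$ is large enough that a fixed $g\neq1$ in $\Gamma(D)$ lies outside $U_k$.

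So your route exhibits nontrivial finite quotients of $\Gamma(D)$ using only finiteness of the residue field. The paper's route uses the commutator filtration to show that $\Gamma(D)$ is residually solvable, so that $[\Gamma(D),\Gamma(D)]$ is already a proper nontrivial normal subgroup.
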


\begin{theorem}\label{thm:degrees}
For a given odd integer $n>1$, the following conditions are equivalent:
\begin{enumerate}
\item $n$ has at least two distinct prime divisors;
\item there exists a central division algebra $D/\Q$ of degree $n$
      such that $\Gamma(D)$ is abstractly simple.
\end{enumerate}
\end{theorem}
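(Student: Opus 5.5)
By Theorem~\ref{thm:criterion}, condition (2) is equivalent to the existence of a central division algebra $D/\Q$ of degree $n$ with $T(D)=\varnothing$. So the plan is to translate this into a statement about local invariants and then use the Albert--Brauer--Hasse--Noether classification. The classification says that a class in $\mathrm{Br}(\Q)$ is given by a family $(\inv_v)_v$ with $\inv_v\in\Q/\Z$, almost all zero and $\sum_v \inv_v=0$. Its index equals its exponent, namely $\lcm_v$ of the orders of the $\inv_v$. Moreover, $D\otimes\Q_p$ is a division algebra if and only if the order of $\inv_p$ equals $n$. Since $n$ is odd, the real invariant lies in $\tfrac12\Z/\Z$ and has order dividing $n$, so $\inv_\infty=0$. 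Hence only the finite primes matter.

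For (1)$\Rightarrow$(2), write $n=ab$ with $a,b>1$ coprime; this is possible because $n$ has two distinct prime divisors (take $a$ to be a full prime power in $n$). Choose distinct primes $p_1,p_2,p_3,p_4$ and set
\[
\inv_{p_1}=1/a,\quad \inv_{p_2}=-1/a,\quad \inv_{p_3}=1/b,\quad \inv_{p_4}=-1/b,
\]
with all other invariants zero. The sum of the invariants is $0$, so this family defines a class in $\mathrm{Br}(\Q)$. Its exponent, hence its index, is $\lcm(a,b)=n$. By Wedderburn there is therefore a unique central division algebra $D$ of degree $n$ in this class. Each local order is $a$ or $b$, strictly less than $n$, so $T(D)=\varnothing$. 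This generalizes the $1/3,-1/3,1/5,-1/5$ example from the abstract.

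For (2)$\Rightarrow$(1), suppose $n=q^k$ is a prime power and $D$ has degree $n$. The order of each $\inv_p$ divides $q^k$, so it is a power of $q$. The lcm of a finite set of powers of $q$ is the largest of them. Since the lcm equals $n$, some prime $p$ has $\inv_p$ of order exactly $q^k=n$, so $p\in T(D)$ and $T(D)\neq\varnothing$. By Theorem~\ref{thm:criterion}, $\Gamma(D)$ is then not simple. (The hypothesis $n>1$ excludes $n=1$, where $n$ is trivially not of this form anyway.)

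The main point needing care is the equality of index and exponent over global fields, together with the local index criterion. Both are standard consequences of Albert--Brauer--Hasse--Noether, and I would cite them rather than reprove them. The rest is elementary arithmetic of orders in $\Q/\Z$.
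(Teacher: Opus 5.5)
Your proof is correct and follows the paper's route. You reduce (2) to the existence of $D$ with $T(D)=\varnothing$ via Theorem~\ref{thm:criterion}; for $n=q^k$ you note that the local indices are powers of $q$, so their least common multiple $n$ is attained at some finite place, whose completion is then a division algebra; and for $n=ab$ with coprime $a,b>1$ you prescribe invariants $\pm1/a,\pm1/b$ at four finite primes, giving degree $n$ with every local index below $n$. The only cosmetic differences are that you allow any four distinct primes instead of $2,3,5,7$ and phrase the degree computation as index $=$ exponent rather than citing the local-index form of Theorem~\ref{thm:hasse}.
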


\begin{corollary}\label{cor:15}
There exists a countably infinite nonabelian simple torsion-free group
that embeds in $\SL_{225}(\Q)$ and in $\SL_{15}(\R)$. One may take
$\Gamma(D)$, where the local invariants of $D/\Q$ are
\begin{equation}\label{eq:invariants15}
 \inv_2(D)=\frac13,\quad \inv_3(D)=-\frac13,\quad
 \inv_5(D)=\frac15,\quad \inv_7(D)=-\frac15,
\end{equation}
and are zero at every other place.
\end{corollary}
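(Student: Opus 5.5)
The plan is to deduce the corollary directly from Theorem~\ref{thm:criterion}. I will construct $D$ from the invariants \eqref{eq:invariants15} and then check its hypotheses: $D$ is a division algebra of odd degree $n=15>1$, and $T(D)=\varnothing$.

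First, existence. By the Albert--Brauer--Hasse--Noether theorem, the sequence
\[
0\to \operatorname{Br}(\Q)\to \bigoplus_v \operatorname{Br}(\Q_v)\xrightarrow{\ \sum\inv_v\ }\Q/\Z\to 0
\]
is exact. So a family of local invariants, almost all zero, comes from a unique Brauer class exactly when the invariants sum to $0$ in $\Q/\Z$. Here $\frac13-\frac13+\frac15-\frac15=0$, and we take $\inv_\infty=0$ together with all other invariants zero. Hence there is a unique class $[A]\in\operatorname{Br}(\Q)$ with these invariants, and we let $D$ be the central division algebra representing it.

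Next, the degree. The local index of $D$ at $v$ equals the order of $\inv_v(D)$ in $\Q/\Z$, since period equals index over local fields. These local indices are $3$ at $p=2,3$, $5$ at $p=5,7$, and $1$ elsewhere. Over a global field the index equals the exponent, which is the lcm of the local orders (Hasse). Therefore $\deg D=\operatorname{ind}D=\lcm(3,3,5,5)=15$, which is odd and greater than $1$.

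Now $T(D)$. For each prime $p$, the algebra $D\otimes_\Q\Q_p$ is central simple of degree $15$ over $\Q_p$. It is a division algebra if and only if its index is $15$, that is, iff $\inv_p(D)$ has order $15$. No invariant here has order $15$: the largest orders are $3$ and $5$. Hence $T(D)=\varnothing$.

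Theorem~\ref{thm:criterion} then shows that $\Gamma(D)$ is countably infinite, nonabelian, torsion-free and abstractly simple. It also gives embeddings into $\SL_{n^2}(\Q)=\SL_{225}(\Q)$ and $\SL_n(\R)=\SL_{15}(\R)$. The second embedding uses $D\otimes\R\cong M_{15}(\R)$, which holds because $\inv_\infty=0$.

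The only step that needs care is the identification of the global index as the lcm of the local indices. This is a standard consequence of the Hasse principle, so I expect no real obstacle: the corollary is a direct instance of Theorem~\ref{thm:criterion}, and the same reasoning illustrates the implication (1)$\Rightarrow$(2) of Theorem~\ref{thm:degrees} for $n=15$.
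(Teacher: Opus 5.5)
Your proposal is correct and is essentially the paper's proof. The paper specializes the construction from the proof of Theorem~\ref{thm:degrees} to $a=3$, $b=5$, uses Theorem~\ref{thm:hasse} to obtain a division algebra of degree $\lcm(3,5)=15$ whose local indices $3$, $5$, $1$ are all less than $15$ (so $T(D)=\varnothing$), and then reads off every listed property from Theorem~\ref{thm:criterion}; your explicit justification of the index formula (period equals index, and the global period is the lcm of the local ones) is just the content that the paper packages into Theorem~\ref{thm:hasse}.
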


The proofs use the local-invariant theorem for central simple algebras
and the established Margulis--Platonov theorem for $\mathbf{SL}_{1,D}$.
No general simplicity assertion for $\SL_1(D)$ over arbitrary fields is
assumed. This paper isolates consequences of the cited results; no
claim of priority is made.

\begin{remark}\label{rem:status}
Problem 16.78 of the Kourovka Notebook,
proposed by B. Poizat, asks whether there
exists a nonabelian simple linear group
without involutions.

Corollary 1.3 provides a candidate affirmative
answer through the reduced norm-one group
of a central division algebra over
$\mathbb{Q}$ of degree $15$.

The argument relies on established results
concerning the Brauer group of a number
field and the Margulis--Platonov theorem
for $\mathrm{SL}_1(D)$.

The construction has not been independently
verified by human mathematicians.
We make no claim of priority, and the
relationship of this construction to
previously known examples requires
further investigation.
\end{remark}

\section{Elementary properties of reduced norm-one groups}

After extending scalars to an algebraic closure $\overline F$, one has
$D\otimes_F\overline F\simeq M_n(\overline F)$. The reduced norm is the
determinant in such a splitting; it is multiplicative and satisfies
\begin{equation}\label{eq:scalar-norm}
 \Nrd_{D/F}(c)=c^n\qquad(c\in F)
\end{equation}
for central scalars. In particular, every commutator $aba^{-1}b^{-1}$
of elements of $D^\times$ belongs to $\Gamma(D)$.

\begin{lemma}\label{lem:center}
Let $F$ be infinite, and let $D/F$ be a central division algebra of
degree $n\ge2$. Then
\[
 Z(\Gamma(D))=\{c\in F^\times:c^n=1\}.
\]
\end{lemma}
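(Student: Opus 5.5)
The inclusion $\supseteq$ is immediate: a scalar $c\in F^\times$ with $c^n=1$ lies in $\Gamma(D)$ by \eqref{eq:scalar-norm}, and it is central in all of $D^\times$. For the reverse inclusion, the plan is to show that any $z\in Z(\Gamma(D))$ commutes with every element of $D^\times$. Then $z$ lies in $Z(D)=F$, and $\Nrd(z)=z^n=1$ forces $z$ to be an $n$-th root of unity.

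To see that $z$ centralises $D^\times$, I would pass from $\Gamma(D)$ to its linear span. Let $A\subseteq D$ be the $F$-subspace spanned by $\Gamma(D)$. It is closed under multiplication, so it is an $F$-subalgebra of $D$ containing $1$. Since $z$ commutes with $\Gamma(D)$, it commutes with $A$. It therefore suffices to prove $A=D$.

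The key step, and the main obstacle, is this density statement: $\Gamma(D)$ spans $D$ when $F$ is infinite. I would use commutators, which lie in $\Gamma(D)$ by the remark after \eqref{eq:scalar-norm}. Fix $x\in D$ and let $t\in F$ be such that $x+t\neq0$; this excludes at most one value of $t$. For any $a\in D^\times$, the element $a(x+t)a^{-1}(x+t)^{-1}$ lies in $\Gamma(D)\subseteq A$. It follows that $a(x+t)a^{-1}\in A\,(x+t)$ for all but finitely many $t$.

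A cleaner route is to note that $A$ is a subalgebra of a division algebra, hence itself a division algebra, and that it is normalised by $D^\times$: indeed $\Gamma(D)$ is normal in $D^\times$, since $\Nrd(aga^{-1})=\Nrd(g)$. By the Cartan--Brauer--Hua theorem, either $A=D$ or $A\subseteq F$.

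So it remains to rule out $\Gamma(D)\subseteq F$. Suppose it held. Then every commutator of $D^\times$ would be central, and I would aim for a contradiction using the infinitude of $F$. Take $a,b\in D^\times$ that do not commute; such elements exist because $n\ge2$. For all $t\in F$ with $b+t\neq0$, the element $a(b+t)a^{-1}(b+t)^{-1}=\lambda_t$ is a scalar. Hence $aba^{-1}+t=\lambda_t(b+t)$. Comparing two distinct values of $t$ gives $(\lambda_t-\lambda_s)b=(\lambda_t-\lambda_s)\cdot(\text{scalar})$ unless $\lambda_t=\lambda_s$.
\begin{itemize}
\item If $\lambda_t\neq\lambda_s$, then $b\in F$, which contradicts the choice of $b$.
\item If $\lambda_t=\lambda_s$ for all such $t,s$, subtracting the two relations gives $(t-s)=\lambda(t-s)$, so $\lambda=1$. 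Then $aba^{-1}=b$, again a contradiction.
\end{itemize}
Hence $A=D$, and therefore $z\in F$, which completes the argument.

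I expect the only delicate point to be correctly invoking Cartan--Brauer--Hua, or equivalently checking that $A$ is a division subalgebra stable under conjugation. Here $A$ is a finite-dimensional domain over $F$, so it is a division algebra. The rest is routine.
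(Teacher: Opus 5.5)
Your argument is correct, but it takes a different route from the paper's.

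\textbf{The paper's route.} The paper argues geometrically. It notes that $z$ commutes with every commutator $aba^{-1}b^{-1}$ of $D^\times$. It then uses Zariski density of $D^\times(F)$ in $\GL_n(\overline F)$, which is where the infinitude of $F$ enters, to conclude that $z$ commutes with every commutator in $\GL_n(\overline F)$. Finally it writes each elementary matrix $I+tE_{ij}$ as an explicit commutator $heh^{-1}e^{-1}$, which forces $z$ to be scalar.

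\textbf{Your route.} You stay inside $D$. The $F$-span $A$ of $\Gamma(D)$ is a finite-dimensional domain, hence a division subalgebra, and it is stable under conjugation by $D^\times$. Cartan--Brauer--Hua then gives $A=D$ or $A\subseteq F$. Your two-value computation with $a(b+t)a^{-1}(b+t)^{-1}$ excludes the second case.

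\textbf{Comparison.} Your approach needs no splitting field and no density argument. It also never uses that $F$ is infinite: a single pair $t\neq s$ suffices, and $b+t\neq0$ holds automatically because $b\notin F$. This is consistent with the statement, since by Wedderburn's theorem no noncommutative central division algebra exists over a finite field, so the hypothesis is automatic. You also obtain the slightly stronger fact that $\Gamma(D)$ spans $D$, so its centralizer in $D$ is exactly $F$. The cost is that you use Cartan--Brauer--Hua as a black box, whereas the paper's argument is self-contained apart from the density fact and an explicit matrix identity.

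\textbf{Cleanup.} The paragraph beginning ``Fix $x\in D$'' is an abandoned partial argument that your final proof does not use, and you should delete it.
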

\begin{proof}
The inclusion from right to left follows from \eqref{eq:scalar-norm}.
Let $z\in Z(\Gamma(D))$. Then
\begin{equation}\label{eq:centralizes-commutators}
 z(aba^{-1}b^{-1})=(aba^{-1}b^{-1})z
 \qquad(a,b\in D^\times).
\end{equation}
Put $k=\overline F$ and choose a splitting
$D\otimes_Fk\simeq M_n(k)$. The rational points $D^\times(F)$ are
Zariski dense in $D^\times(k)$. Indeed, in an $F$-basis of $D$, this is
the principal open subset of affine space defined by $\Nrd\ne0$.
If a polynomial vanishes at its $F$-points, multiplying that polynomial
by $\Nrd$ gives a polynomial vanishing on all of $F^{n^2}$. Since $F$
is infinite, successively considering the variables shows that this
product polynomial is zero, and hence that the original polynomial is
zero. The same argument proves density for the product of two such sets.

Both sides of \eqref{eq:centralizes-commutators} are regular
matrix-valued functions on the open set of invertible pairs.
Consequently, after extending scalars, the matrix $z$ commutes with
every commutator in $\GL_n(k)$.

Choose $\lambda\in k\setminus\{0,1\}$. For $i\ne j$, let $h$ be the
diagonal matrix with entry $\lambda$ in position $i$ and all other
diagonal entries equal to $1$. For
$e=I+t(\lambda-1)^{-1}E_{ij}$, one has
\[
 heh^{-1}e^{-1}=I+tE_{ij}.
\]
Thus $z$ commutes with every $E_{ij}$, $i\ne j$. Write $z=(z_{rs})$.
The $(r,j)$ entry of $zE_{ij}=E_{ij}z$, for $r\ne i$, gives $z_{ri}=0$,
and the $(i,j)$ entry gives $z_{ii}=z_{jj}$. Therefore $z$ is scalar.
It commutes with all of $D$, so $z\in Z(D)=F$. The identity
$\Nrd(z)=1$ now yields $z^n=1$.
\end{proof}

\begin{lemma}\label{lem:torsion}
Let $D$ be a finite-dimensional division algebra over $\Q$ whose
dimension is odd. Every element of finite order in $D^\times$ is $1$
or $-1$. If $D$ is central and has odd degree, then $\Gamma(D)$ is
torsion-free.
\end{lemma}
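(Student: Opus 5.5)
Let $x\in D^\times$ have finite order $m$. The plan is to show that $x$ generates a field $\Q(x)\subseteq D$ whose degree over $\Q$ is forced to be odd, and that no root of unity other than $\pm1$ lives in an odd-degree field.

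First, $\Q(x)$ is a commutative subalgebra of a division algebra, so it is a field $K$ containing $\Q$. The algebra $D$ is a left $K$-vector space by multiplication, hence $\dim_\Q D=[K:\Q]\cdot\dim_K D$. Since $\dim_\Q D$ is odd, $[K:\Q]$ is odd. Since $x^m=1$, $x$ is a primitive $d$-th root of unity for some $d\mid m$, and $K\simeq\Q(\zeta_d)$ because the minimal polynomial of $x$ is the irreducible cyclotomic polynomial $\Phi_d$. Thus $[K:\Q]=\varphi(d)$. But $\varphi(d)$ is even for every $d\ge3$: if an odd prime $p$ divides $d$, then $p-1$ divides $\varphi(d)$; otherwise $d=2^k$ with $k\ge2$, and $\varphi(d)=2^{k-1}$. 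Hence $d\in\{1,2\}$, so $x=\pm1$.

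For the second assertion, suppose $D$ is central over $\Q$ of odd degree $n$. Then $\dim_\Q D=n^2$ is odd, so the first part applies. Any torsion element of $\Gamma(D)$ is therefore $\pm1$. Since $\Nrd(-1)=(-1)^n=-1$ by \eqref{eq:scalar-norm}, we have $-1\notin\Gamma(D)$, and $\Gamma(D)$ is torsion-free. This is also consistent with Lemma~\ref{lem:center}, which gives trivial centre.

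There is no real obstacle here. The only points needing care are the dimension count, which uses that $D$ is a free $K$-module because $K$ is a division ring, and the parity of $\varphi(d)$. Both are routine.
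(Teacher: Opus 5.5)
Your proof is correct. It shares the paper's outline: $\Q[x]$ is a field, the dimension count forces $[\Q[x]:\Q]$ to be odd, and $\Nrd(-1)=(-1)^n=-1$ excludes $-1$. Where you differ is the middle step.

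You identify the minimal polynomial of $x$ as $\Phi_d$, using the irreducibility of cyclotomic polynomials over $\Q$, and then note that $\varphi(d)$ is even for $d\ge3$. The paper argues more directly. The minimal polynomial $f$ of $x$ has odd degree, so it has a real root $r$. Since $f$ divides $X^m-1$, that root is $r=\pm1$. An irreducible polynomial over $\Q$ with a rational root is linear, so $x=\pm1$.

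The paper's route is more elementary. It uses only the intermediate value theorem, not the irreducibility of $\Phi_d$. It also works unchanged with any subfield of $\R$ in place of $\Q$. Your route gives sharper information, namely $\Q[x]\simeq\Q(\zeta_d)$ of degree $\varphi(d)$. For the parity conclusion alone, though, irreducibility of $\Phi_d$ is more than you need. Since $\Phi_d$ has no real roots for $d\ge3$, each of its irreducible factors has even degree, which is essentially the paper's argument.

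One small point: a commutative subring of a division ring need not be a field. Your claim that $\Q(x)$ is a field should therefore cite finite-dimensionality over $\Q$, or the fact that $x$ is algebraic, as the paper does. This is immediate here.
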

\begin{proof}
Let $x^m=1$, where $m\ge1$. The algebra $L=\Q[x]$ is commutative,
finite-dimensional and has no zero divisors. Multiplication by any
nonzero $a\in L$ is injective and hence surjective; thus $L$ is a field.
Viewing $D$ as a left vector space over $L$, we obtain
\[
 [D:\Q]=[L:\Q]\dim_LD.
\]
Hence the irreducible minimal polynomial $f\in\Q[X]$ of $x$ has odd
degree. It divides $X^m-1$ and has a real root $r$. Thus $r^m=1$, so
$r=1$ or $r=-1$. An irreducible polynomial over $\Q$ with a rational
root is linear. It follows that $x=1$ or $x=-1$. If $D$ is central of
odd degree $n$, then $\Nrd(-1)=(-1)^n=-1$, so $-1\notin\Gamma(D)$.
\end{proof}

\begin{proposition}\label{prop:linear}
For a central division algebra $D/F$ of degree $n$, left multiplication
gives a faithful representation
\[
 \Gamma(D)\hookrightarrow\SL_{n^2}(F).
\]
\end{proposition}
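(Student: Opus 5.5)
The plan is to view $D$ as an $F$-vector space of dimension $n^2$ and take, for each $x\in\Gamma(D)$, the $F$-linear map $L_x\colon D\to D$, $L_x(d)=xd$. After fixing an $F$-basis of $D$, the assignment $x\mapsto L_x$ gives a map $D^\times\to\GL_{n^2}(F)$, and it suffices to check three things: that it is a homomorphism, that it is injective, and that its restriction to $\Gamma(D)$ lands in $\SL_{n^2}(F)$.

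The homomorphism property is associativity, $L_{xy}=L_xL_y$, and $L_1=\mathrm{id}$. Injectivity is immediate, since $L_x(1)=x$, so $L_x=\mathrm{id}$ forces $x=1$.

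The main step is the determinant. I would prove the standard identity
\[
 \det(L_x)=\Nrd_{D/F}(x)^n\qquad(x\in D).
\]
To do this, extend scalars to $\overline F$. The determinant of left multiplication is unchanged by base change, and $D\otimes_F\overline F\simeq M_n(\overline F)$. On $M_n(\overline F)$, left multiplication by a matrix $X$ acts on each of the $n$ columns separately. So $L_X$ is block-diagonal with $n$ copies of $X$, and $\det L_X=\det(X)^n$. Since the reduced norm is the determinant in such a splitting, this gives the identity. Consequently $\Nrd(x)=1$ implies $\det L_x=1$, so $\Gamma(D)\hookrightarrow\SL_{n^2}(F)$.

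The only point needing care is that the determinant of $L_x$ computed in an $F$-basis of $D$ equals the one computed over $\overline F$ after transporting through the splitting isomorphism. This holds because the determinant of an $\overline F$-linear map does not depend on the chosen basis, and an $F$-basis of $D$ stays an $\overline F$-basis of $D\otimes_F\overline F$.
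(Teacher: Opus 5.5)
Your proposal is correct and follows the paper's proof essentially verbatim. You use left multiplication, injectivity via $L_x(1)=x$, and the identity $\det L_x=\Nrd(x)^n$, obtained after splitting $D\otimes_F\overline F\simeq M_n(\overline F)$, where $L_X$ acts identically on each of the $n$ columns. Your extra remark that the determinant is unchanged under base change and transport through the splitting isomorphism simply makes explicit a step the paper leaves implicit.
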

\begin{proof}
Define $L_x(y)=xy$ on the $F$-vector space $D$. Then
$L_{xy}=L_xL_y$ and $L_x^{-1}=L_{x^{-1}}$. If $L_x=I$, then
$x=L_x(1)=1$. After splitting $D$, left multiplication by a matrix
in $M_n(\overline F)$ acts independently and identically on the $n$
columns. Therefore $\det L_x=\Nrd(x)^n$, which equals $1$ when
$x\in\Gamma(D)$.
\end{proof}

\section{A local obstruction to simplicity}

A group $H$ is \emph{perfect} if $H=[H,H]$. A group $G$ is
\emph{residually solvable} if every nonidentity element has a nonidentity
image under some homomorphism to a solvable group. The solvable image
need not be finite. Write $G^{(0)}=G$ and
$G^{(j+1)}=[G^{(j)},G^{(j)}]$ for the derived series.

\begin{lemma}\label{lem:valued}
Let $\Delta$ be a division ring with a discrete valuation
$v:\Delta^\times\twoheadrightarrow\Z$, extended by $v(0)=+\infty$,
and suppose that its residue division ring is commutative. Then
$\Delta^\times$ is residually solvable and contains no nontrivial
perfect subgroup.
\end{lemma}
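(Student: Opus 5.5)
We use the standard filtration attached to the valuation. Let $\mathcal O=\{x:v(x)\ge0\}$ be the valuation ring, $\mathfrak m=\{x:v(x)>0\}$ its maximal ideal, and $\kappa=\mathcal O/\mathfrak m$ the residue division ring, which is commutative by hypothesis. Put $U_0=\mathcal O^\times=\ker v$ and $U_i=1+\mathfrak m^i$ for $i\ge1$. Since $v$ is a homomorphism onto $\Z$, we get $[\Delta^\times,\Delta^\times]\subseteq U_0$.

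Reduction modulo $\mathfrak m$ gives a homomorphism $U_0\to\kappa^\times$ with kernel $U_1$. As $\kappa$ is commutative, $[U_0,U_0]\subseteq U_1$. For $i\ge1$, the map $1+x\mapsto x \bmod \mathfrak m^{i+1}$ is a homomorphism $U_i\to\mathfrak m^i/\mathfrak m^{i+1}$ with kernel $U_{i+1}$, because
\[
(1+x)(1+y)=1+x+y+xy \quad\text{and}\quad xy\in\mathfrak m^{2i}\subseteq\mathfrak m^{i+1}.
\]
The target is an additive group, hence abelian, so $[U_i,U_i]\subseteq U_{i+1}$. We only use that $\mathfrak m^i$ is a two-sided ideal of $\mathcal O$. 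By induction, the derived series satisfies
\[
(\Delta^\times)^{(j)}\subseteq U_{j-1}\qquad (j\ge1).
\]

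The key point is that $\bigcap_i U_i=\{1\}$. If $1+x\in U_i$ for every $i$, then $v(x)\ge i$ for all $i$, so $x=0$. This uses that $v$ is discrete with values in $\Z$.

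Residual solvability follows. Given $g\ne1$, pick $i$ with $g\notin U_i$. The subgroup $U_i$ is normal in $\Delta^\times$, since conjugation preserves $v$ and therefore preserves $\mathfrak m^i$. The quotient $\Delta^\times/U_i$ is solvable, because its derived series reaches the identity after $i+1$ steps by the containment above. The image of $g$ in this quotient is nontrivial.

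For perfect subgroups, let $H\le\Delta^\times$ satisfy $H=[H,H]$. Then $H=H^{(j)}\subseteq(\Delta^\times)^{(j)}\subseteq U_{j-1}$ for every $j$. Hence $H\subseteq\bigcap_i U_i=\{1\}$. The main thing to check carefully is that each successive quotient $U_i/U_{i+1}$ is abelian in the noncommutative setting; the product computation above settles this. Commutativity of the residue ring is used only at the step from $U_0$ to $U_1$.
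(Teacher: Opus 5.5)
Your proof is correct and follows the paper's approach: the same filtration $U_r=1+\mathfrak m^r$ of $\Delta^\times$, the derived series pushed down this filtration, and $\bigcap_r U_r=1$ giving both residual solvability and the absence of nontrivial perfect subgroups. The only difference is minor. You show each layer $U_i/U_{i+1}$ is abelian via $1+x\mapsto x \bmod \mathfrak m^{i+1}$ and get $(\Delta^\times)^{(j)}\subseteq U_{j-1}$, whereas the paper uses the identity $[1+a,1+b]-1=(ab-ba)(1+a)^{-1}(1+b)^{-1}$ to get $[U_r,U_s]\subseteq U_{r+s}$ and the faster bound $(\Delta^\times)^{(j+2)}\subseteq U_{2^j}$; either bound suffices.
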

\begin{proof}
Put $\mathcal O=\{a:v(a)\ge0\}$, $\mathfrak p=\{a:v(a)>0\}$ and
\[
 U_r=\{1+a:v(a)\ge r\},\qquad r\ge1.
\]
If $v(a)>0$, then $v(1+a)=0$: otherwise $1=(1+a)-a$ would contradict
the nonarchimedean inequality. The identities
\[
 (1+a)(1+b)=1+a+b+ab,\qquad
 (1+a)^{-1}-1=-a(1+a)^{-1}
\]
show that $U_r$ is a subgroup. It is normal in $\Delta^\times$, since
$v(xax^{-1})=v(a)$.

The group $\Delta^\times/\mathcal O^\times\simeq\Z$ is abelian,
and reduction gives
$\mathcal O^\times/U_1\simeq(\mathcal O/\mathfrak p)^\times$,
which is also abelian. Hence $(\Delta^\times)''\subseteq U_1$.
With the convention $[x,y]=xyx^{-1}y^{-1}$, we have
\[
 [1+a,1+b]-1=(ab-ba)(1+a)^{-1}(1+b)^{-1}.
\]
It follows that $[U_r,U_s]\subseteq U_{r+s}$, and induction yields
\begin{equation}\label{eq:derived-depth}
 (\Delta^\times)^{(j+2)}\subseteq U_{2^j}\qquad(j\ge0).
\end{equation}
For every $r$, the quotient $\Delta^\times/U_r$ is solvable: choose $j$
with $2^j\ge r$ in \eqref{eq:derived-depth}. The intersection of all
$U_r$ is trivial, since a nonzero element $x-1$ has finite valuation.
Thus these solvable quotients separate the elements of $\Delta^\times$.

If $H\le\Delta^\times$ is perfect, its image in each of these quotients
is both perfect and solvable, hence trivial. Therefore
$H\subseteq\bigcap_rU_r=1$.
\end{proof}

\begin{proposition}\label{prop:local-obstruction}
Let $D/\Q$ be a central division algebra of odd degree $n>1$.
If $T(D)\ne\varnothing$, then $\Gamma(D)$ is not simple.
\end{proposition}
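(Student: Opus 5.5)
Fix a prime $p\in T(D)$ and put $\Delta=D\otimes_{\Q}\Q_p$, a central division algebra of degree $n$ over $\Q_p$. The plan is to use the local structure of $\Delta$ together with Lemma~\ref{lem:valued}. The aim is a nontrivial proper normal subgroup of $\Gamma(D)$, obtained as the preimage of a congruence subgroup.

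\emph{Step 1: the valuation on $\Delta$.} By the standard theory of local division algebras, the valuation $v_p$ on $\Q_p$ extends uniquely to $\Delta$ by
\[
 w(x)=\tfrac1n v_p(\Nrd_{\Delta/\Q_p}(x)).
\]
After normalizing, $w$ becomes a discrete valuation $\Delta^\times\twoheadrightarrow\Z$. Its residue division ring is a finite division ring, namely $\mathbb F_{p^n}$. By Wedderburn's theorem this residue ring is commutative, so Lemma~\ref{lem:valued} applies to $\Delta$. The natural map $D\to\Delta$ is injective, so $\Gamma(D)$ embeds in $\Delta^\times$. Hence $\Gamma(D)$ is residually solvable, and it has no nontrivial perfect subgroup.

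\emph{Step 2: locating the subgroup.} Suppose, for contradiction, that $\Gamma(D)$ is simple. It is nonabelian: Lemma~\ref{lem:center} shows its centre is finite, while $\Gamma(D)$ itself is infinite. The infinitude comes from elements $x/\sigma(x)$ in a cyclic maximal subfield, or we may cite Theorem~\ref{thm:criterion}. Therefore $[\Gamma(D),\Gamma(D)]$ is a nontrivial normal subgroup, and simplicity forces it to be all of $\Gamma(D)$. So $\Gamma(D)$ is perfect. Since $\Gamma(D)\ne1$, this contradicts Step~1, and the proposition follows.

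As a more explicit alternative, one can take $N=\Gamma(D)\cap U_1$ with $U_1$ as in the proof of Lemma~\ref{lem:valued}. Normality of $N$ is clear. Properness would follow from finding an element of $\Gamma(D)$ whose reduced norm is $1$ but whose residue is not $1$. Nontriviality would follow from density of $\Gamma(D)$ in $\SL_1(\Delta)$. The perfectness argument avoids both of these checks, so I would use it.

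\emph{Main obstacle.} The only genuinely external input is Step~1: that $D\otimes\Q_p$ carries a discrete valuation with commutative (finite) residue ring. This is classical local class field theory (Hasse), and I would cite it rather than reprove it. Nonabelianness is the only other point needing care. It needs $\Gamma(D)$ to have more than its finite centre, which holds because $\Gamma(D)$ is infinite. That in turn follows, for instance, from $N_{K/\Q}$-kernel elements $x/\sigma(x)$ in any cyclic degree-$n$ subfield $K\subset D$ with generator $\sigma$.
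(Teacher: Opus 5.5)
Your proposal is correct and is essentially the paper's proof: embed $\Gamma(D)$ in $(D\otimes_{\Q}\Q_p)^\times$, use the discrete valuation with finite (hence commutative) residue field and Lemma~\ref{lem:valued} to rule out nontrivial perfect subgroups, and note that a nonabelian simple group would have to be perfect. The only difference is cosmetic: you get nonabelianness from ``finite centre, infinite group'', whereas the paper uses the trivial centre ($n$ odd) together with a nonidentity commutator of reduced norm one. If you do want infinitude, it follows most directly from nontriviality plus Lemma~\ref{lem:torsion}; this avoids both the existence of a cyclic maximal subfield and citing Theorem~\ref{thm:criterion}, whose proof invokes this very proposition.
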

\begin{proof}
Choose $p\in T(D)$. The natural map
$D\to D_p=D\otimes_{\Q}\Q_p$ is injective, since it is scalar
extension for a finite-dimensional vector space over a field.
Thus $\Gamma(D)$ embeds in $D_p^\times$.

A finite-dimensional central division algebra over $\Q_p$ has a discrete
valuation with value group isomorphic to $\Z$ and a finite residue
field; normalize the value group to be $\Z$. The local unit-group
properties used here are described, for example, in
\cite[proof of Lemma~1, p.~299]{Rapinchuk2006}.
By Lemma~\ref{lem:valued}, the group $D_p^\times$ has no nontrivial
perfect subgroup.

On the other hand, $\Gamma(D)$ is nontrivial: noncommuting elements of
$D^\times$ give a nonidentity commutator of reduced norm one.
By Lemma~\ref{lem:center}, its center is
$\{c\in\Q^\times:c^n=1\}=1$, so it is nonabelian. If it were simple,
its nontrivial commutator subgroup would be the entire group, making
it perfect. This is a contradiction.
\end{proof}

\section{The global theorems used}

Write $F_v$ for the completion of a global field $F$ at a place $v$.
Every central simple $F_v$-algebra has the form $M_{r_v}(\Delta_v)$,
where $\Delta_v$ is a central division algebra. Its local index is
$d_v=\deg\Delta_v$; if the original algebra has degree $n$, then
$r_vd_v=n$.

\begin{theorem}[Brauer--Hasse--Noether]\label{thm:hasse}
Let $F$ be a number field. A family of classes $a_v\in\Q/\Z$ is the
family of local invariants of a central simple algebra over $F$ if
and only if it has finite support, $a_v=0$ at complex places,
$a_v\in\{0,1/2\}$ at real places, and $\sum_va_v=0$.
The division representative of the corresponding Brauer class is
unique up to $F$-isomorphism. Its degree is the least common multiple
of the local indices, and the local index is the order of $a_v$ in
$\Q/\Z$.
\end{theorem}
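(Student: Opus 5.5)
This statement is the classical Albert--Brauer--Hasse--Noether theorem, and the paper uses it only as a citation. My plan is to assemble it from the standard inputs of local and global class field theory rather than to reprove those inputs. I would present it in four stages: local invariants, the global exact sequence, uniqueness of the division representative, and the identification of the degree with the least common multiple of the local indices.

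\emph{Local theory.} For a nonarchimedean place $v$, local class field theory gives an isomorphism $\inv_v:\mathrm{Br}(F_v)\to\Q/\Z$. Concretely, every class is split by the unramified extension of degree $m$ and is represented by a cyclic algebra $(F_v^{(m)}/F_v,\mathrm{Frob},\pi^a)$ with invariant $a/m$. From this description I would read off that the local index $d_v=\deg\Delta_v$ equals the order of $a_v$ in $\Q/\Z$, that is, local period equals local index. At real places I would use Frobenius' theorem, which gives $\mathrm{Br}(\R)\simeq\{0,1/2\}$. At complex places $\mathrm{Br}(\mathbb C)=0$. Finite support of the family holds because a central simple algebra has a maximal order that is Azumaya, hence split, at almost all places.

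\emph{Global exact sequence.} The core input is the exact sequence
\[
 0\to\mathrm{Br}(F)\to\bigoplus_v\mathrm{Br}(F_v)\xrightarrow{\ \sum\inv_v\ }\Q/\Z\to0 .
\]
It has three parts:
\begin{itemize}
\item Injectivity on the left is the Hasse principle.
\item The vanishing of the sum of invariants of a global class is the reciprocity law, reduced to cyclic algebras.
\item Every family with sum zero occurring is a surjectivity statement.
\end{itemize}
I would quote all three from global class field theory, for instance via $H^2$ of the id\`ele class group. Uniqueness of the division representative is Wedderburn's theorem: each class in $\mathrm{Br}(F)$ contains a unique division algebra up to isomorphism.

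\emph{Degree as an lcm.} Let $D$ be the division representative and $m=\lcm_v d_v$. First, $d_v$ divides $\deg D$ for every $v$, because $D\otimes F_v\simeq M_{r_v}(\Delta_v)$ with $r_vd_v=\deg D$. Hence $m$ divides $\deg D$. For the reverse divisibility, I would produce a cyclic extension $L/F$ of degree $m$ whose local degree at each $v$ in the support is divisible by $d_v$. This uses the Grunwald--Wang theorem. The special case can be sidestepped, because we only need divisibility of local degrees and may enlarge the prescribed local degrees inside $m$, or take cyclotomic-type extensions at the finitely many relevant places. By the local theory, a local extension of degree divisible by $d_v$ splits $\Delta_v$. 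So $L$ splits $D$ everywhere locally, and by the Hasse principle $L$ splits $D$ globally. Since $D$ is division, $\deg D$ divides $[L:F]=m$. Hence $\deg D=m$.

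\emph{Main obstacle.} I expect the main obstacle to be this period-equals-index step, namely the existence of a cyclic splitting field with prescribed local degrees. Everything else is formal once the local invariant map and the Hasse principle are granted. I would handle this step by citing the standard treatment (Pierce, \emph{Associative Algebras}, Ch.~18, or Reiner, \emph{Maximal Orders}, \S32) rather than rederiving Grunwald--Wang.
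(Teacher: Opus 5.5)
Your outline is correct, but it is a different kind of argument from the paper's. The paper proves nothing here: it imports the theorem as a black box from Hasse (1933, (6.43) and (6.55)) and Milne's notes, adding only the remark that $\mathrm{Br}(\R)$ has order~$2$.

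You unpack it into the standard ingredients:
\begin{itemize}
\item local invariants, with local period equal to local index;
\item the exact sequence $0\to\mathrm{Br}(F)\to\bigoplus_v\mathrm{Br}(F_v)\to\Q/\Z\to0$;
\item Wedderburn for uniqueness;
\item a splitting-field argument for the lcm.
\end{itemize}
This shows that, beyond the exact sequence, the only nonformal input is a splitting field of degree exactly $m=\lcm_v d_v$. That is exactly what Theorem~\ref{thm:degrees} needs.

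On that step, cyclicity is more than you need. Any extension $K/F$ of degree $m$ with $d_v\mid[K_w:F_v]$ for all $w\mid v$ splits $D$, by the Hasse principle over $K$. Such a $K$ comes from weak approximation plus Krasner's lemma. At each finite $v$ in the support, approximate a polynomial defining the unramified extension of degree $m$. At each real $v$ with $d_v=2$, approximate a polynomial without real roots. Add an auxiliary finite place if needed to force irreducibility. This bypasses Grunwald--Wang entirely.

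If you keep the cyclic route, your proposed ways around the special case do not work as stated.
\begin{itemize}
\item There is no room to ``enlarge local degrees inside $m$'' when some $d_v=m$, which always happens in prime-power degree.
\item Cyclotomic constructions typically give cyclic fields of degree larger than $m$. Their degree-$m$ subfield can lose the local divisibility: a place with decomposition group of order $2$ in a cyclic quartic field splits in the quadratic subfield.
\end{itemize}
What actually avoids the special case is choosing suitable local extensions of full degree at the dyadic places, e.g.\ ramified ones. In Wang's example, $\Q(\zeta_{32})^+$ does this. For this step you should rely on the cited treatments.
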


The index formula and the existence of algebras with prescribed
invariants appear in
\cite[(6.43), p.~749; (6.55), p.~752]{Hasse1933}.
A modern formulation is given in
\cite[Chapter~VIII, Theorem~4.2 and Example~4.4(b)]{MilneCFT}.
The real Brauer group has order~$2$; in particular, odd degree rules
out a nonzero invariant at a real place.

\begin{theorem}[Margulis--Platonov for $\mathbf{SL}_{1,D}$]\label{thm:MP}
Let $D$ be a finite-dimensional central division algebra over a global
field $F$, and let $\mathbf G=\mathbf{SL}_{1,D}$. Put
\[
 T=\{v\text{ nonarchimedean}:D\otimes_FF_v\text{ is a division algebra}\},
 \qquad \mathbf G_T=\prod_{v\in T}\mathbf G(F_v).
\]
For every normal subgroup $N\triangleleft\mathbf G(F)$ not contained
in the center, there exists an open normal subgroup
$W\triangleleft\mathbf G_T$ such that $N=\delta^{-1}(W)$, where
$\delta:\mathbf G(F)\to\mathbf G_T$ is the diagonal map.
If $T=\varnothing$, then $\mathbf G_T=1$, and $\mathbf G(F)$ has no
proper noncentral normal subgroup.
\end{theorem}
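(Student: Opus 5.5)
The statement is the deep input we import rather than reprove. The plan below records how it is obtained in the literature, so that the reader can see which hypotheses are used. Throughout, $\mathbf G=\mathbf{SL}_{1,D}$ is an anisotropic inner form of type $\mathrm A_{n-1}$. The argument has two halves. First, every noncentral normal subgroup $N$ has finite index in $\mathbf G(F)$. Second, every finite-index normal subgroup is a congruence subgroup, and the only congruence conditions that survive are those at places of $T$.

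\textbf{Finite index.} I would invoke the theorem of Rapinchuk, Segev and Seitz that finite quotients of $D^\times$ are solvable. Segev's earlier work rules out nonabelian finite simple quotients; Rapinchuk--Segev--Seitz upgrade this to solvability. I would combine it with Margulis' ``normal subgroup'' strategy, as carried out for anisotropic groups by Margulis and Platonov, which shows that $\mathbf G(F)/N$ is finite once it has no nonabelian infinite simple sections of the relevant type. Concretely, one proves that $\mathbf G(F)/N$ has the ``property (T) half'' and the ``amenable half''. This is done by passing to a $p$-adic completion $F_v$ at which $\mathbf G$ is isotropic, where $\mathbf G(F_v)\simeq\SL_{r_v}(\Delta_v)$ with $r_v\ge2$. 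Such $v$ exist in abundance, because every place outside $T$ with nontrivial local index has $r_v\ge2$, and almost all places split $D$. I expect this to be the main obstacle: the finite-index step is precisely the Margulis conjecture for type $\mathrm A_{n-1}$, and I would not attempt to reprove it but cite it.

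\textbf{Congruence subgroup property.} Given finite index, I would use the congruence subgroup property for $\mathbf{SL}_{1,D}$, due to Rapinchuk and Raghunathan in the relevant ranks, together with the vanishing of the metaplectic kernel in this setting. These give that $N$ is open in the $S$-adic topology, so $N\supseteq\mathbf G(F)\cap U$ for some open subgroup $U\le\prod_{v\ \text{finite}}\mathbf G(F_v)$. Strong approximation then applies: $\mathbf G$ is simply connected and isotropic at some place outside $T$. Hence $\mathbf G(F)$ is dense in the restricted product, so $N$ corresponds to the open normal subgroup $\overline N$ of that product.

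\textbf{Reduction to $T$.} For $v\notin T$ nonarchimedean, $\mathbf G(F_v)=\SL_{r_v}(\Delta_v)$ with $r_v\ge2$. By Tits' simplicity theorem it is simple modulo its finite center, so an open normal subgroup must contain all of it. Thus $\overline N=W\times\prod_{v\notin T}\mathbf G(F_v)$ with $W\triangleleft\mathbf G_T$ open, giving $N=\delta^{-1}(W)$. When $T=\varnothing$ one gets $W=\mathbf G_T=1$ and $N=\mathbf G(F)$, which is the final assertion.
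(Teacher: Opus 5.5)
Treating this as an imported theorem matches the paper, which gives no argument and only cites Rapinchuk--Segev--Seitz (Theorem~6.4) and Rapinchuk's 2006 paper. The problem is the sketch you present as the literature route: its middle step relies on a result that, to my knowledge, is not available. I know of no congruence subgroup theorem for $\mathbf{SL}_{1,D}$ that you could cite. Raghunathan's results concern $F$-isotropic groups, and $\mathbf{SL}_{1,D}$ is $F$-anisotropic. As far as I know, the anisotropic cases settled by Rapinchuk and others are of other types, and for inner forms of type $\mathrm A_{n-1}$ with $n\ge 3$ (such as the degree-$15$ algebra used later) the congruence subgroup problem is still open. If anything, the dependence runs the other way: (MP) is an input to the congruence subgroup problem for anisotropic groups. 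So your passage from ``$N$ has finite index'' to ``$N\supseteq\mathbf G(F)\cap U$'' is unsupported, and that passage is the real content of the theorem.

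You have also placed the Rapinchuk--Segev--Seitz theorem in the wrong step. A statement about finite quotients cannot show that $\mathbf G(F)/N$ is finite, and that step needs no such input. Margulis's normal subgroup theorem, applied to an $S$-arithmetic $\Gamma$ with $\operatorname{rk}_S\mathbf G\ge 2$, gives $[\Gamma:N\cap\Gamma]<\infty$. Because the congruence kernel is profinite, strong approximation together with projective simplicity of the isotropic groups $\SL_{r_v}(\Delta_v)$ then gives $[\mathbf G(F):N]<\infty$.

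As I recall the sources, the solvability theorem is what takes the place of your CSP step. There, (MP) for anisotropic $\mathbf G$ is reduced to a statement about the finite quotients of $\mathbf G(F)$, roughly that none of them is nonsolvable. That statement is what forces a finite-index normal subgroup to equal $\delta^{-1}(W)$, and Rapinchuk--Segev--Seitz supply it for $\mathbf{SL}_{1,D}$. Your final reduction to $T$ via Tits simplicity is fine, provided you apply strong approximation after deleting one isotropic place (e.g.\ $\infty$ when $F=\Q$).
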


\begin{remark}
Let $D$ be a central division algebra of
degree $n$ over a number field $F$.

At a nonarchimedean place $v$, write
\[
D\otimes_F F_v \simeq M_{r_v}(\Delta_v),
\]
where $\Delta_v$ is a central division
algebra over $F_v$.

The group $\mathrm{SL}_1(D)$ is
$F_v$-anisotropic precisely when
$r_v=1$, equivalently when
$D\otimes_F F_v$ is a division algebra.

Consequently, the set of anisotropic
nonarchimedean places occurring in the
Margulis--Platonov theorem is exactly
the set $T(D)$ defined in (1.2).
\end{remark}

We use the established theorem, not the conjecture for arbitrary
algebraic groups. The precise statement is given in
\cite[Theorem~6.4, p.~948]{RSS2002}; see also
\cite[pp.~295 and~298]{Rapinchuk2006}. Here $N$ is an abstract normal
subgroup; the topological condition is imposed on $W$, not assumed
in advance for $N$. The set $T$ consists of division completions,
not all nonsplit completions.

\section{Proofs of the main results}

\begin{proof}[Proof of Theorem~\ref{thm:criterion}]
Lemma~\ref{lem:center} gives $Z(\Gamma(D))=1$, since $n$ is odd.
A nonidentity commutator in the noncommutative group $D^\times$ shows
that $\Gamma(D)\ne1$; triviality of the center then excludes
commutativity. Lemma~\ref{lem:torsion} gives torsion-freeness, which
implies infinitude. Countability follows from the vector-space
isomorphism $D\simeq\Q^{n^2}$. The first faithful representation is
given by Proposition~\ref{prop:linear}.

At the real place, the local index divides $n$ and is either $1$ or $2$.
Since $n$ is odd, that index is $1$, so
$D\otimes_{\Q}\R\simeq M_n(\R)$. The scalar-extension map
$D\to D\otimes_{\Q}\R$ is injective. Its restriction to $\Gamma(D)$
has image in $\SL_n(\R)$, because the reduced norm becomes the
determinant after splitting. This gives the second faithful
representation.

If $T(D)=\varnothing$, Theorem~\ref{thm:MP} excludes proper noncentral
normal subgroups of $\Gamma(D)$. Since the center is trivial, every
nontrivial normal subgroup is noncentral; hence $\Gamma(D)$ is
abstractly simple. The converse is
Proposition~\ref{prop:local-obstruction}.
\end{proof}

\begin{proof}[Proof of Theorem~\ref{thm:degrees}]
First suppose that $n=\ell^e$, where $\ell$ is prime and $e\ge1$.
For every central division algebra $D/\Q$ of degree $n$, each local
index $d_v$ divides $n$ and is therefore a power of $\ell$.
By Theorem~\ref{thm:hasse}, the least common multiple of these indices
is $n$. Since there are only finitely many nonzero local invariants,
this least common multiple is attained as the largest local index.
The real index is $1$, because $n$ is odd. Thus there is a finite place
$p$ with $d_p=n$. Then $D\otimes_{\Q}\Q_p$ is itself a division
algebra, so $T(D)\ne\varnothing$. Theorem~\ref{thm:criterion} implies
that $\Gamma(D)$ is not simple.

Conversely, suppose that $n$ has at least two distinct prime divisors.
Partitioning its prime-power factors into two nonempty parts gives
$n=ab$, with $a,b>1$ and $\gcd(a,b)=1$. Prescribe the invariants
\begin{equation}\label{eq:general-invariants}
 \inv_2(D)=\frac1a,\quad \inv_3(D)=-\frac1a,\quad
 \inv_5(D)=\frac1b,\quad \inv_7(D)=-\frac1b
\end{equation}
and zero invariants at every other place. This family is admissible:
it has finite support, the real invariant is zero, and the sum is zero.
Theorem~\ref{thm:hasse} provides a division representative $D$ of degree
$\lcm(a,b)=ab=n$. The local indices are $a$, $b$ or $1$, all strictly
less than $n$. Hence no finite completion is a division algebra,
$T(D)=\varnothing$, and $\Gamma(D)$ is simple by
Theorem~\ref{thm:criterion}.
\end{proof}

\begin{proof}[Proof of Corollary~\ref{cor:15}]
Take $a=3$ and $b=5$ in \eqref{eq:general-invariants}. This gives
\eqref{eq:invariants15} and degree $n=15$.
The local algebras are listed in Table~\ref{tab:local}. All matrix
sizes are greater than $1$, so $T(D)=\varnothing$. The remaining
properties follow from Theorem~\ref{thm:criterion}.
\end{proof}

\begin{table}[htbp]
\centering
\caption{Local algebras in the degree-$15$ example. In rows containing
$\Delta_p$, its degree is given in the third column.}
\label{tab:local}
\begin{tabular}{@{}cccc@{}}
\toprule
Place & Invariant & Index & Local algebra \\
\midrule
$2$ & $1/3$ & $3$ & $M_5(\Delta_2)$ \\
$3$ & $-1/3$ & $3$ & $M_5(\Delta_3)$ \\
$5$ & $1/5$ & $5$ & $M_3(\Delta_5)$ \\
$7$ & $-1/5$ & $5$ & $M_3(\Delta_7)$ \\
other $p$ & $0$ & $1$ & $M_{15}(\Q_p)$ \\
$\infty$ & $0$ & $1$ & $M_{15}(\R)$ \\
\bottomrule
\end{tabular}
\end{table}

\begin{remark}
The number $15=3\cdot5$ is the least odd degree that admits a simple
group $\SL_1(D)$ in the family of central division algebras over $\Q$
considered here. This is not a statement about the least dimension
of a linear representation among all simple torsion-free groups.
The existential quantifier in Theorem~\ref{thm:degrees} is also
essential: for the same values of $n$, invariants $1/n$ and $-1/n$
at two finite places yield an algebra with $T(D)\ne\varnothing$
and a nonsimple norm-one group.
\end{remark}

\begin{corollary}
The simple groups obtained in Theorem~\ref{thm:degrees} are not
finitely generated.
\end{corollary}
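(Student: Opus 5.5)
The idea is that a finitely generated subgroup of $\Gamma(D)$ has only finitely many primes in its denominators. We then show that $\Gamma(D)$ itself has unbounded denominators at split primes. This argument does not use simplicity; it only uses that $D$ is a central division algebra over $\Q$ of degree $n>1$ and that almost all completions split.

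\textbf{Step 1 (denominators).} Fix a $\Z$-lattice $\mathcal O\subset D$ that is a $\Z$-order; for instance, the $\Z$-span of a multiplicatively closed basis, or any maximal order. Suppose $\Gamma(D)$ were generated by $g_1,\dots,g_k$. Write each $g_i^{\pm1}$ in a $\Z$-basis of $\mathcal O$ and let $m$ be a common denominator of all the coordinates. Then every element of the group lies in $\mathcal O[1/m]$. Hence, for every prime $p\nmid m$, the image of $\Gamma(D)$ in $D_p=D\otimes_\Q\Q_p$ lies in the compact group $(\mathcal O\otimes\Z_p)^\times$. In particular, this image is a bounded subset of $D_p$.

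\textbf{Step 2 (choice of $p$).} By Theorem~\ref{thm:hasse}, only finitely many local invariants are nonzero. So we can choose a prime $p\nmid m$ with $D_p\simeq M_n(\Q_p)$. Then $\mathbf{SL}_{1,D}(\Q_p)\simeq\SL_n(\Q_p)$, which is unbounded, for example because of the diagonal elements $\mathrm{diag}(p,p^{-1},1,\dots,1)$. It therefore suffices to show that $\Gamma(D)$ is not bounded in $D_p$.

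\textbf{Step 3 (density; main obstacle).} I expect this to be the crux: producing elements of $\Gamma(D)$ that are large at $p$. My first choice is weak approximation for the simply connected (indeed $\Q$-rational) group $\mathbf{SL}_{1,D}$. It says that $\Gamma(D)=\mathbf{SL}_{1,D}(\Q)$ is dense in $\mathbf{SL}_{1,D}(\Q_p)=\SL_n(\Q_p)$; this is in Platonov--Rapinchuk, and is also implicit in the framework of \cite{RSS2002}. Density in an unbounded group immediately contradicts Step~1.

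If I want to avoid citing weak approximation, the fallback is a torus. Choose a maximal subfield $L\subset D$ with $L\otimes\Q_p$ a product of at least two fields; such $L$ exist by choosing $L$ to have prescribed local behaviour at $p$, which is compatible with embedding into $D$ because $D_p$ is split. Then use the norm-one torus $R^1_{L/\Q}\mathbb G_m$, which is a rational variety and hence satisfies weak approximation, or more explicitly elements $\sigma(y)/y$ when $L$ is cyclic. These give $x\in L$ with $N_{L/\Q}(x)=1$ but $v_{\mathfrak P}(x)\ne0$ for some prime $\mathfrak P\mid p$. Such an $x$ lies in $\Gamma(D)$ and has unbounded powers in $D_p$, again contradicting Step~1.

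Either way, $\Gamma(D)$ is not finitely generated. In particular, this applies to the simple groups of Theorem~\ref{thm:degrees}.
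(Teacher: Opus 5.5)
Your argument is correct, and it takes a genuinely different route from the paper. The paper's proof is two lines: by Malcev's theorem a finitely generated linear group is residually finite, while an infinite simple group has no nontrivial finite quotients.

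Both proofs rest on the same observation. Malcev's theorem for subgroups of $\GL_N(\Q)$ is proved by exactly your Step~1 followed by reduction modulo primes $p\nmid m$. What differs is what that observation is played against. The paper plays it against simplicity. Its input is therefore elementary, but the conclusion is tied to the simple groups of Theorem~\ref{thm:degrees}. You play it against unboundedness at a split prime. That needs a genuinely arithmetic input: weak approximation for the simply connected group $\mathbf{SL}_{1,D}$, due to Kneser and Platonov. In return it proves more: $\SL_1(D)$ is never finitely generated for a central division algebra $D/\Q$ of degree $n>1$, whether or not it is simple, and of either parity.

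Two side remarks should be fixed, though nothing depends on them. First, weak approximation here comes from simple connectedness, not from $\Q$-rationality of $\mathbf{SL}_{1,D}$, which you should not assert in general. Second, norm-one tori of non-cyclic extensions need not be rational or satisfy weak approximation, so your fallback should stay with cyclic $L$.

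An even cheaper fallback avoids both issues. For $y\in D^\times$, the element $x=y^n/\Nrd(y)$ lies in $\Gamma(D)$ by \eqref{eq:scalar-norm}. Choose $y$ close to $\mathrm{diag}(p,1,\dots,1)$ in $D_p\simeq M_n(\Q_p)$, which is possible since $D$ is dense in $D_p$. Then $x$ has an eigenvalue of valuation $n-1\ne0$, whereas every element of $(\mathcal O\otimes\Z_p)^\times$ has only unit eigenvalues.
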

\begin{proof}
By Malcev's theorem, a finitely generated linear group is residually
finite; see \cite[introduction and \S3]{Nica2013}.
Every homomorphism from an infinite simple group to a finite group
is trivial: otherwise its kernel would be a proper normal subgroup,
hence trivial, and the group would embed in a finite group.
This contradicts residual finiteness.
\end{proof}

\section{Scope of the construction}

The construction uses the full group of rational points
$\mathbf{SL}_{1,D}(\Q)$, not the unit group of an order in $D$ or a
finitely generated arithmetic subgroup. Simplicity of this group
does not imply simplicity of its finitely generated subgroups.

The local obstruction in Section~3 does not contradict the existence
of the global example. Table~\ref{tab:local} contains nonsplit
algebras but no division completions: the global index is the least
common multiple of the local indices, not necessarily their maximum.

The external Theorems~\ref{thm:hasse} and~\ref{thm:MP} are used as
published results; their full proofs are not reproduced.
No finite computation is used to prove abstract simplicity.
The bibliographic Remark~\ref{rem:status} is not an additional
mathematical hypothesis and does not assert recognition or priority
for the result.

\section*{Verification and AI Disclosure}

\subsection*{Use of Artificial Intelligence}

The mathematical construction, proofs, and
manuscript were generated using GPT-6.1 Sol.

The submitting author has disclosed the use
of generative artificial intelligence and
acknowledges responsibility for the
mathematical claims and presentation.
\subsection*{Status of Mathematical Verification}

The principal simplicity argument relies on
the Brauer--Hasse--Noether theorem and the
Margulis--Platonov theorem for
$\mathrm{SL}_1(D)$ over global fields.

Neither the computational experiments nor
the use of multiple AI-generated verification
procedures constitutes an independent
human verification of the complete proof.

At the time of submission, the proposed
solution to Problem~16.78 of the Kourovka
Notebook has not been independently
confirmed by human mathematicians.

The manuscript is therefore presented as
an unverified candidate solution.

\bibliographystyle{amsalpha}

\begin{thebibliography}{9}

\bibitem{Hasse1933}
H.~Hasse,
\emph{Die Struktur der R.~Brauerschen Algebrenklassengruppe \"uber einem
algebraischen Zahlk\"orper},
Math. Ann. \textbf{107} (1933), no.~1, 731--760.
\url{https://doi.org/10.1007/BF01448916}

\bibitem{Kourovka2026}
E.~I. Khukhro and V.~D. Mazurov (eds.),
\emph{Unsolved Problems in Group Theory. The Kourovka Notebook},
21st edition, version 49, October 20, 2026.
arXiv:1401.0300.
\url{https://arxiv.org/abs/1401.0300v49}

\bibitem{MilneCFT}
J.~S. Milne,
\emph{Class Field Theory},
version 4.03, August 6, 2020.
\url{https://www.jmilne.org/math/CourseNotes/CFT.pdf}

\bibitem{Nica2013}
B.~Nica,
\emph{Linear groups --- Malcev's theorem and Selberg's lemma},
2013.
arXiv:1306.2385.
\url{https://arxiv.org/abs/1306.2385v1}

\bibitem{Poizat2026}
B.~Poizat,
\emph{Deux corps},
revised preprint, September 2026,
Appendix G, p.~26.
\url{https://modnet.imj-prg.fr/preprints/papers/26-09,%20Poizat.pdf}

\bibitem{Rapinchuk2006}
A.~S. Rapinchuk,
\emph{The Margulis--Platonov conjecture for $\mathrm{SL}_{1,D}$ and
2-generation of finite simple groups},
Math. Z. \textbf{252} (2006), no.~2, 295--313.
\url{https://doi.org/10.1007/s00209-005-0854-1}

\bibitem{RSS2002}
A.~S. Rapinchuk, Y.~Segev, and G.~M. Seitz,
\emph{Finite quotients of the multiplicative group of a finite dimensional
division algebra are solvable},
J. Amer. Math. Soc. \textbf{15} (2002), no.~4, 929--978.
\url{https://doi.org/10.1090/S0894-0347-02-00393-4}

\end{thebibliography}

\end{document}